\theoremstyle{plain}
\newtheorem{theorem}{Theorem}
\theoremstyle{definition}
\theoremstyle{example}
\theoremstyle{remark}
\newtheorem{remark}{Remark}
\numberwithin{equation}{section}
\begin{document}

%%%
%%%
%%%%%%%%%%%%%%%%%%%%%%%%%%%%%%%%%%%%%%%%%%%%%%%%%%%%%%%%%%%%%%%%%%%%%%%%%%
%%
%%%
\title[On $k$-noncrossing partitions]
      {On $k$-noncrossing partitions}
\author{Emma Y. Jin, Jing Qin and Christian M. Reidys$^{\,\star}$}
\address{Center for Combinatorics, LPMC-TJKLC \\
         Nankai University  \\
         Tianjin 300071\\
         P.R.~China\\
         Phone: *86-22-2350-6800\\
         Fax:   *86-22-2350-9272}
\email{reidys@nankai.edu.cn}
\thanks{}
\keywords{partition, $k$-noncrossing, $2$-regular, enhanced partition, braid,
         tangled-diagram,
          difference equation}
\date{October, 2007}
\begin{abstract}
In this paper we prove a duality between $k$-noncrossing partitions over
$[n]=\{1,\dots,n\}$ and $k$-noncrossing braids over $[n-1]$. This duality
is derived
directly via (generalized) vacillating tableaux which are in correspondence
to tangled-diagrams \cite{Reidys:07vac}.
We give a combinatorial interpretation of the bijection in terms of the
contraction of arcs of tangled-diagrams.
Furthermore it induces by restriction a bijection between $k$-noncrossing,
$2$-regular partitions over $[n]$ and $k$-noncrossing braids without
isolated points over $[n-1]$.
Since braids without isolated points correspond to enhanced partitions
this allows, using the results of \cite{MIRXIN}, to enumerate $2$-regular,
$3$-noncrossing partitions.
\end{abstract}
\maketitle {{\small
%\tableofcontents
}}

%%%
%%%
%%%%%%%%%%%%%%%%%%%%%%%%%%%%%%%%%%%%%%%%%%%%%%%%%%%%%%%%%%%%%%%%%%%%%%%%
%%%
%%%

\section{Introduction and Background}\label{S:1}

%%%
%%%%%%%%%%%%%%%%%%%%%%%%%%%%%%%%%%%%%%%%%%%%%%%%%%%%%%%%%%%%%%%%%%%%%%%%
%%%

In this paper we prove a duality between $k$-noncrossing partitions and
braids, a particular type of tangled-diagrams \cite{Reidys:07vac}.
The duality implies a bijection between $2$-regular, $k$-noncrossing
partitions
and $k$-noncrossing braids without isolated points, which are in bijection
to enhanced partitions. We then compute the number of $3$-noncrossing,
$2$-regular partitions over $[n]=\{1,\dots,n\}$, i.e.~$k$-noncrossing
partitions without arcs of the form $(i,i+1)$. The enumeration
of $3$-noncrossing, $2$-regular partitions is not entirely trivial. This
is due to the fact that the lack of $1$-arcs translates into an asymmetry
induced by the nonexistence of the pair of steps ($(\varnothing,+\square_1),
(-\square_1,\varnothing)$), where ``$\,\pm\square_i\,$'' denotes the
adding/removing of a square in the $i$th row of the shape.
We derive the above duality directly via the (generalized) vacillating
tableaux \cite{Reidys:07vac} and prove its combinatorial interpretation
in terms of the contraction of arcs, originally introduced by
Chen~{\it et.al.}~in \cite{Chen-reduction} in the context of a reduction
algorithm for noncrossing partitions.

Our results imply novel connections between different combinatorial objects
and are of conceptual interest.
For instance, Bousquet-M\'{e}lou and Xin \cite{MIRXIN} have enumerated
$3$-noncrossing partitions and $3$-noncrossing enhanced partitions
separately, using kernel methods in nontrivial calculations.
By construction enhanced partitions correspond to hesitating tableaux
\cite{Chen} which accordingly enumerate braids without isolated points.
Our duality theorem implies therefore that either one of these
computations would imply the other.
Furthermore our results integrate the concepts of vacillating and hesitating
tableaux due to Chen~{\it et.al.}~\cite{Chen}.
$2$-regular partitions are of particular importance in the context of
enumerating RNA tertiary structures with base triples \cite{Reidys:073d}.

%%%
%%%%%%%%%%%%%%%%%%%%%%%%%%%%%%%%%%%%%%%%%%%%%%%%%%%%%%%%%%%%%%%%%%%%%%%%
%%%
\section{Tangled-diagrams and vacillating tableaux}
\label{S:2}
%%%
%%%%%%%%%%%%%%%%%%%%%%%%%%%%%%%%%%%%%%%%%%%%%%%%%%%%%%%%%%%%%%%%%%%%%%%%
%%%

In this Section we provide some basics on tangled-diagrams \cite{Reidys:07vac}.
A tangled-diagram is a labeled  graph, $G_n$, over $[n]$ with degree
$\le 2$, represented by drawing its vertices in a horizontal line and
its arcs $(i,j)$ in the upper halfplane having the following properties:
two arcs $(i_1,j_1)$ and $(i_2,j_2)$ such that $i_1<i_2<j_1<j_2$ are crossing
and if $i_1<i_2<j_2<j_1$ they are nesting.
Two arcs $(i,j_1)$ and $(i,j_2)$ (common lefthand endpoint) and
$j_1<j_2$ can be drawn in two ways: either draw $(i,j_1)$ strictly below
$(i,j_2)$ in which case $(i,j_1)$ and $(i,j_2)$ are nesting (at $i$) or
draw $(i,j_1)$ starting above $i$ and intersecting $(i,j_2)$ once,
in which case $(i,j_1)$ and $(i,j_2)$ are crossing (at $i$):
\begin{center}
\scalebox{0.25}[0.25]{\includegraphics*[30,710][560,810]{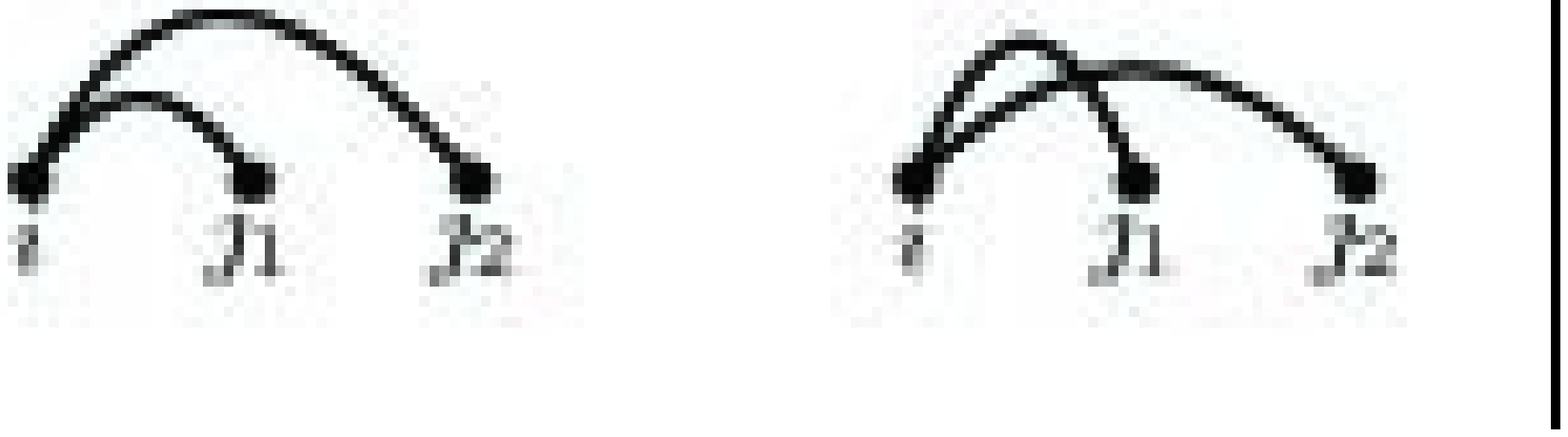}}
\end{center}
and of two arcs $(i,j),(i,j)$, i.e.~where $i$ and $j$ are both:
right- and lefthand endpoints are completely analogous. Suppose
$i<j<h$ and that we are given two arcs $(i,j)$ and $(j,h)$. Then we
can draw them intersecting once or not. In the former case $(i,j)$
and $(j,h)$ are called crossing, in the latter noncrossing arcs:
\begin{center}
\scalebox{0.6}[0.6]{\includegraphics*[60,780][560,830]{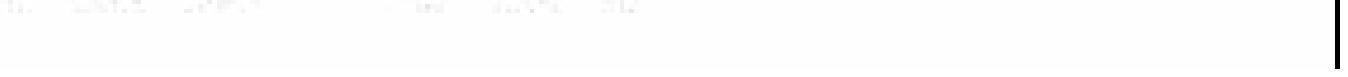}}
\end{center}
A $k$-noncrossing braid is a $k$-noncrossing tangled-diagram in
which all vertices $j$ of degree two are either incident to loops
$(j,j)$ or crossing arcs $(i,j)$ and $(j,h)$, where $i<j<h$. We
denote the set of $k$-noncrossing braids over $[n]$ by
$\mathcal{B}_k(n)$. For instance
\begin{center}
\scalebox{0.6}[0.6]{\includegraphics*[60,780][560,830]{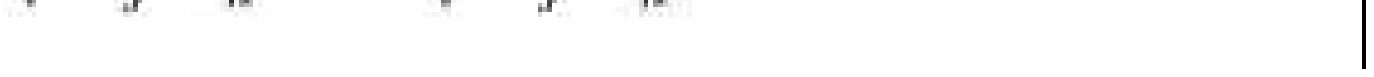}}
\end{center}
A shape is a collection of squares, ``$\,\square\,$'', arranged in
left-justified rows with weakly decreasing number of squares in each
row.
%A standard Young tableau is a filling of the squares by numbers
% which is strictly decreasing in each row and in each column.
% We refer to standard Young tableaux as Young tableaux.
A vacillating tableaux $V_{\lambda}^{2n}$ of shape $\lambda$ and
length $2n$ is a sequence $(\lambda^{0}, \lambda^{1},\ldots,
\lambda^{2n})$ of shapes such that
{\sf (i)} $\lambda^{0}=\varnothing$ and $\lambda^{2n}=\lambda,$ and
{\sf (ii)} $(\lambda^{2i-1},\lambda^{2i})$ is derived from
           $\lambda^{2i-2}$, for $1\le i\le n$ by either
$(\varnothing,\varnothing)$: do nothing twice;
$(-\square,\varnothing)$: first remove a square then do nothing;
$(\varnothing,+\square)$: first do nothing then add a square;
$(\pm \square,\pm \square)$: add/remove a square at the odd and even steps,
respectively. Let $\mathcal{V}_\lambda^{2n}$ denote the set of vacillating
tableaux, for instance,
\begin{center}
\scalebox{0.6}[0.6]{\includegraphics*[10,750][580,830]{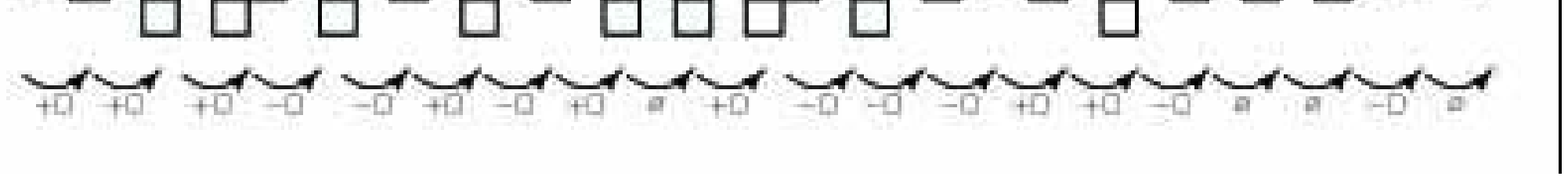}}
\end{center}
We have the following bijection between tangled-diagrams and
generalized vacillating tableaux \cite{Reidys:07vac} which
integrates the notions of vacillating and hesitating tableaux of
Chen~{\it et.al.}~\cite{Chen}. In the following we refer to
generalized vacillating tableaux simply as vacillating tableaux.
%%%
%%%%%%%%%%%%%%%%%%%%%%%%%%%%%%%%%%%%%%%%%%%%%%%%%%%%%%%%%%%%%%%%%%%%
%%%
\begin{theorem}\label{T:bij}
There exists a bijection between the set of vacillating tableaux of shape
$\varnothing$ and length $2n$, $\mathcal{V}_\varnothing^{2n}$ and the
set of tangled-diagrams over $n$ vertices, $\mathcal{G}_n$
\begin{equation}
\beta\colon \mathcal{V}_{\varnothing}^{2n}  \longrightarrow
\mathcal{G}_n \ .
\end{equation}
Furthermore, a tangled-diagram $G_n$ is $k$-noncrossing if and only if all
shapes $\lambda^i$
in its corresponding vacillating tableau have less than $k$ rows,
i.e.~$\beta\colon \mathcal{V}_\varnothing^{2n}\longrightarrow \mathcal{G}_n$
maps vacillating tableaux having less than $k$ rows into $k$-noncrossing
tangled-diagrams. Furthermore there exists a bijection between the set of
$k$-noncrossing and $k$-nonnesting tangled-diagrams.
\end{theorem}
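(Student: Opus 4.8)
The plan is to realize $\beta$ by an insertion/deletion procedure of Robinson--Schensted type --- in the spirit of the bijection between partitions and vacillating tableaux of Chen, Deng, Du, Stanley and Yan, and of the one between enhanced partitions and hesitating tableaux of Chen et al.\ --- and then to read off the remaining two assertions from standard properties of that procedure. Concretely, given a tangled-diagram $G_n$, one scans its vertices one at a time while maintaining a standard Young tableau $T$ (empty to start) that records the arcs currently ``in progress''; the shapes of $T$ before and after the two half-steps spent at vertex $i$ are the entries $\lambda^{2i-1}$ and $\lambda^{2i}$, and $\lambda^0=\lambda^{2n}=\varnothing$ is automatic. At vertex $i$ one half-step treats the arc(s) for which $i$ is the \emph{opening} endpoint by a row-insertion, recording a $+\square$; the other treats the arc(s) for which $i$ is the \emph{closing} endpoint by an inverse row-insertion, recording a $-\square$; an isolated vertex records $(\varnothing,\varnothing)$, and a loop $(i,i)$ is an insertion immediately undone by the matching deletion. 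The feature absent from ordinary partitions and matchings is that a vertex of a tangled-diagram may be the opening (or closing) endpoint of \emph{two} arcs, and that two arcs sharing an endpoint occur in a crossing and a nesting (respectively a crossing and a noncrossing) variant; all of this is encoded by the order in which the two insertions or deletions are carried out at that vertex, together with the corner from which an inverse insertion starts. The one substantial task is to pin down this local rule so that the procedure is well defined and so that running every half-step backwards reconstructs $G_n$; this is a finite but delicate case analysis over the local configurations of Section~\ref{S:2}, and is carried out in \cite{Reidys:07vac}.

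For the $k$-noncrossing statement I would establish the sharper claim that, at every stage of the scan, the number of rows of the current shape equals the largest number of arcs of $G_n$ that are mutually crossing and simultaneously ``in progress'', while the number of columns equals the largest number of mutually nesting such arcs. This is the tangled-diagram analogue of Schensted's theorem: a family of $k$ pairwise crossing arcs forces, at the moment when all of them are in progress, $k$ of the insertions to chain one below another, so the shape has at least $k$ rows; conversely a $k$-row shape can be traced back to $k$ insertions whose endpoints form a $k$-crossing. The nesting assertion is the same with the roles of rows and columns interchanged. Maximising over all stages then gives that $G_n$ is $k$-noncrossing if and only if every $\lambda^i$ has fewer than $k$ rows, and, together with the bijection, that $\beta$ restricts to a bijection from vacillating tableaux having fewer than $k$ rows onto $k$-noncrossing tangled-diagrams.

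The last assertion then comes for free from conjugation of Young diagrams. Sending $(\lambda^0,\dots,\lambda^{2n})$ to the sequence of transposed shapes $((\lambda^0)',\dots,(\lambda^{2n})')$ is an involution of $\mathcal{V}_\varnothing^{2n}$, since transposition carries ``add a square'', ``remove a square'' and ``do nothing'' each to itself, so that every admissible pair of steps is preserved. Transporting this involution through $\beta$ yields an involution of $\mathcal{G}_n$ which, by the sharper claim above, exchanges the maximal number of rows occurring among the shapes with the maximal number of columns --- that is, it exchanges the crossing number with the nesting number --- and therefore maps the $k$-noncrossing tangled-diagrams bijectively onto the $k$-nonnesting ones. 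The only real obstacle in the whole argument is the bookkeeping of the first step; once the insertion/deletion rule is fixed, the remaining two parts are short.
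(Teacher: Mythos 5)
The paper does not actually prove Theorem~\ref{T:bij}: it is quoted verbatim from \cite{Reidys:07vac} and used as a black box, so there is no in-paper proof to compare yours against. Your outline is the standard strategy (an RSK-type insertion/deletion scan producing the shape sequence, a Schensted-type correspondence between mutually crossing arcs ``in progress'' and rows, and conjugation of shapes to exchange crossings with nestings), which is indeed how the cited reference and the Chen--Deng--Du--Stanley--Yan precursor proceed; and you correctly locate the only substantial difficulty in the local rule at a vertex of degree two, which you then defer to \cite{Reidys:07vac} --- exactly as the paper itself does. So as a comparison there is nothing to fault. As a standalone proof, however, be aware that the deferred step is essentially the entire content of the theorem: without an explicit specification of how the two insertions (or the insertion paired with an inverse insertion) at a shared endpoint distinguish the crossing from the nesting (resp.\ noncrossing) variant, neither the well-definedness and invertibility of $\beta$ nor your ``sharper claim'' relating rows to families of mutually crossing in-progress arcs can be checked, and the conjugation argument for the last assertion rests on that same unproved claim for columns versus nestings.
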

%%%
%%%%%%%%%%%%%%%%%%%%%%%%%%%%%%%%%%%%%%%%%%%%%%%%%%%%%%%%%%%%%%%%%%%%
%%%
Restricting the set of generating step-pairs of vacillating tableaux recovers
the bijections of Chen {\it et.al.}~\cite{Chen}:
Let $M=\{(\varnothing,\varnothing),(-\square,\varnothing),
(\varnothing,+\square)\}$, $\mathcal{V}_{P,k,\varnothing}^{2n}$ and
$\mathcal{V}_{B,k,\varnothing}^{2n}$ denote the set of tableaux
with less than $k$ rows and generated by
$P=M\dot\cup \{(-\square,+\square)\}$ and
$B  = M \dot\cup \{(+\square,-\square)\}$.
Theorem~\ref{T:bij} allows us to identify $\mathcal{V}_{P,k,\varnothing}^{2n}$
with $\mathcal{P}_k(n)$ and $\mathcal{V}_{B,k,\varnothing}^{2n}$ with
$\mathcal{B}_k(n)$.
For partitions and braids we have the following correspondences between the
elementary pair-steps and associated tangled-diagram arc-configurations:
\begin{center}
\scalebox{0.5}[0.5]{\includegraphics*[30,760][560,820]{yyy15.eps}}
\end{center}
\begin{center}
\scalebox{0.5}[0.5]{\includegraphics*[40,760][560,820]{yyy16.eps}}
\end{center}
%%%
%%%%%%%%%%%%%%%%%%%%%%%%%%%%%%%%%%%%%%%%%%%%%%%%%%%%%%%%%%%%%%%%%%%%%%%%
%%%

%%%
%%%%%%%%%%%%%%%%%%%%%%%%%%%%%%%%%%%%%%%%%%%%%%%%%%%%%%%%%%%%%%%%%%%%%%%%%
%%%

\section{Main results}\label{S:3}

%%%
%%%%%%%%%%%%%%%%%%%%%%%%%%%%%%%%%%%%%%%%%%%%%%%%%%%%%%%%%%%%%%%%%%%%%%%%%
%%%
We now prove the duality between partitions over $[n]$ and braids over
$[n-1]$. {\it A posteriori} the above bijection can be proved directly.
However, we arrived at this interpretation studying vacillating tableaux of
$k$-noncrossing partitions and braids.

%%%
%%%%%%%%%%%%%%%%%%%%%%%%%%%%%%%%%%%%%%%%%%%%%%%%%%%%%%%%%%%%%%%%%%%%%%%%
%%%
\begin{theorem}\label{T:k-noncross}
Let $k\in\mathbb{N}$, $k\ge 3$. Then we have the bijection
\begin{equation}\label{E:biject1}
\vartheta\colon \mathcal{P}_{k}(n)\longrightarrow   \mathcal{B}_k(n-1) \ ,
\end{equation}
where $\vartheta$ has the following property:
for any $\pi\in\mathcal{P}_k(n)$ holds:
$(i,j)$ is an arc of $\pi$ if and only if $(i,j-1)$ is an arc in
$\vartheta(\pi)$.
\end{theorem}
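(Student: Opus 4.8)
The plan is to realize $\vartheta$ at the level of vacillating tableaux, using the identifications $\mathcal{P}_k(n)\cong\mathcal{V}_{P,k,\varnothing}^{2n}$ and $\mathcal{B}_k(n-1)\cong\mathcal{V}_{B,k,\varnothing}^{2(n-1)}$ furnished by Theorem~\ref{T:bij}. A tableau in $\mathcal{V}_{P,k,\varnothing}^{2n}$ is a sequence $(\lambda^0,\dots,\lambda^{2n})$ of shapes with fewer than $k$ rows whose consecutive pair-steps lie in $P=M\,\dot\cup\,\{(-\square,+\square)\}$, and similarly for braids with $B=M\,\dot\cup\,\{(+\square,-\square)\}$. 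The first vertex of a partition over $[n]$ is never a righthand endpoint, so the pair-step $(\lambda^0,\lambda^1,\lambda^2)=(\varnothing,\lambda^1,\lambda^2)$ associated with vertex $1$ is forced to be one of $(\varnothing,\varnothing)$ or $(\varnothing,+\square)$; both begin with ``do nothing''. This first ``$\varnothing$'' half-step is exactly the cell that has no analogue for braids, since the asymmetry noted in the introduction is precisely the absence of the pattern $((\varnothing,+\square_1),(-\square_1,\varnothing))$.

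\smallskip

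First I would make the bookkeeping precise: strip the first half-step and re-pair. Given the partition tableau $(\lambda^0,\lambda^1,\dots,\lambda^{2n})$, delete $\lambda^0=\varnothing$ and regroup the remaining $2n-1$ shapes; but $2n-1$ is odd, so instead I would append nothing and re-pair the $2n-2$ shapes $(\lambda^2,\dots,\lambda^{2n-1})$ preceded and followed appropriately. Concretely, the cleanest description is: the word of $2n$ letters in $\{+\square,-\square,\varnothing\}$ read off the partition tableau always has its first letter equal to $\varnothing$ (from vertex $1$) and its last letter either $+\square$ or $\varnothing$; removing that leading $\varnothing$ yields a word of length $2n-1$, and I claim re-parsing it into pairs — now shifted by one — turns every old ``remove then add'' into a new ``add then remove'', i.e.\ carries $P$-type steps to $B$-type steps, and leaves the $M$-steps stable up to the shift. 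This is where the odd/even parity swap does the work: an $(-\square,+\square)$ occupying old positions $(2i-1,2i)$ becomes, after deleting position $1$, a pair in new positions $(2i-2,2i-1)$ read as even-then-odd, i.e.\ a $(+\square,-\square)$ with respect to the new grouping read backwards — so I must be careful about the direction of reading, and the honest statement is that the correct map reads the braid tableau and checks it is a valid element of $\mathcal{V}_{B,k,\varnothing}^{2(n-1)}$. Since the ``fewer than $k$ rows'' condition is a pointwise condition on the $\lambda^i$ and is preserved verbatim, the $k$-noncrossing property transfers for free.

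\smallskip

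Next I would verify the arc-shift property. Under $\beta$, the vertex $j$ of the tangled-diagram is ``processed'' by the pair-step at positions $(2j-1,2j)$: a righthand endpoint of an arc terminating at $j$ corresponds to a square-removal at an odd step, a lefthand endpoint corresponds to a square-addition at an even step, and the RSK-type insertion/deletion data records which arc. Deleting the first half-step shifts this correspondence so that what was handled ``at vertex $j$'' in the partition is handled ``at vertex $j-1$'' in the braid, with the same insertion data, hence an arc $(i,j)$ of $\pi$ becomes an arc $(i,j-1)$ of $\vartheta(\pi)$ and conversely. I would also check that a degree-two vertex of $\vartheta(\pi)$ is either a loop or a pair of crossing arcs — this is automatic because $\beta$ lands in braids on $B$-generated tableaux, which is the content of Theorem~\ref{T:bij} specialized to $B$.

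\smallskip

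Finally, bijectivity: the inverse simply prepends a leading $\varnothing$ half-step to a braid tableau and re-pairs, and one checks this lands in $P$-generated tableaux of length $2n$ with fewer than $k$ rows; the two constructions are mutually inverse by inspection of words. \textbf{The main obstacle} I anticipate is purely combinatorial bookkeeping: getting the parity/reading-direction of the re-pairing exactly right so that $P \leftrightarrow B$ really does hold step-by-step (including the degenerate $(\varnothing,\varnothing)$ and the boundary steps at vertices $1$ and $n$), and then matching this against the explicit elementary pair-step/arc-configuration dictionaries displayed after Theorem~\ref{T:bij} so that the arc-shift $(i,j)\mapsto(i,j-1)$ is literally what the tableau manipulation produces. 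Once that dictionary is nailed down the rest is routine. As the paper notes, one can afterwards reprove $\vartheta$ directly in terms of contracting the arcs of the associated tangled-diagram, and I would include that as the conceptual ``a posteriori'' description.
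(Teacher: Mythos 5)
Your overall strategy---transport $\pi$ to its vacillating tableau, delete the forced leading ``do nothing'' half-step, re-pair with a shift of one, and read the result as a braid tableau over $[n-1]$---is exactly the paper's. But you have left unresolved precisely the point where the content lies, and as you have set it up the re-pairing does \emph{not} land in $B$-generated tableaux. Write the partition tableau as pairs $(x_i,y_i)_{i=1}^{n}$ with $(x_i,y_i)\in\{(\varnothing,\varnothing),(-\square,\varnothing),(\varnothing,+\square),(-\square,+\square)\}$. Two boundary facts are needed and you only use one: not only is $x_1=\varnothing$ (vertex $1$ is never a righthand endpoint), but also $y_n=\varnothing$ (vertex $n$ is never an origin, since $\lambda^{2n}=\varnothing$); dropping \emph{both} is what turns $2n$ half-steps into $2(n-1)$ and makes the shift $(\tilde x_i,\tilde y_i)=(y_i,x_{i+1})$ a bijection onto sequences of $n-1$ pairs --- this repairs the odd-count problem you ran into. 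After the shift the pairs lie in $\{(\varnothing,\varnothing),(+\square,\varnothing),(\varnothing,-\square),(+\square,-\square)\}$, and the middle two are \emph{not} legal braid steps (braids use $(\varnothing,+\square)$ and $(-\square,\varnothing)$). The fix, which you flag as ``the main obstacle'' but do not supply, is a second map $\varphi_2$ that transposes exactly those pairs having a $\varnothing$ coordinate and fixes $(+\square,-\square)$; only then does one land in $B=\{(\varnothing,\varnothing),(-\square,\varnothing),(\varnothing,+\square),(+\square,-\square)\}$.

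Two consequences of $\varphi_2$ then still have to be checked, and neither is ``for free'' as you claim. First, transposing a pair changes the intermediate collection of squares $\mu^{2j+1}$, so the odd-indexed terms of the new sequence are not literally among the $\lambda^i$; one shows $\mu^{2j}=\lambda^{2j+1}$ and $\mu^{2j+1}\in\{\lambda^{2j+1},\lambda^{2j+2},\lambda^{2j+3}\}$, which simultaneously gives that every $\mu^i$ is a genuine shape (so the image really is a vacillating tableau) and that the bound of fewer than $k$ rows transfers in both directions. Second, the arc-shift property must be verified against the tableau \emph{after} $\varphi_2$: a short diagram chase shows that origins at $j$ (a ``$+\square$'' in the $j$-th pair-step) remain at $j$, while endpoints at $j+1$ move to $j$, which is exactly the asserted $(i,j)\mapsto(i,j-1)$. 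With $\varphi_2$ and the $y_n=\varnothing$ observation inserted, your outline becomes the paper's proof.
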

%%%
%%%%%%%%%%%%%%%%%%%%%%%%%%%%%%%%%%%%%%%%%%%%%%%%%%%%%%%%%%%%%%%%%%%%%%%%
%%%
\begin{proof}
A $k$-noncrossing partition $\pi$ corresponds via Theorem~\ref{T:bij}
uniquely to a vacillating tableaux, $V_{\varnothing}^{2n}(\pi)=
(\lambda^{i})_{i=0}^{2n}$. Let $\pm \square_h$ denote the adding or
subtracting of the rightmost square ``$\,\square\,$'' in the $h$th
row in a given shape $\lambda$ and let ``$\,\varnothing\,$'' denote
doing nothing.
$(\lambda^{i})_{i=0}^{2n}$ uniquely corresponds to a sequence of pairs
$\sigma_\pi=((x_i,y_i))_{i=1}^{n}$ where $(x_i,y_i)\in\{(\varnothing,
\varnothing), (-\square_j,+\square_h),(\varnothing,+\square_h),
(-\square_h,\varnothing)\}$, $1\le h,j\le k-1$ and
$x_1=y_{n}=\varnothing$. In the following we shall identify the
sequence $(x_i,y_i)_{i=1}^{n}$ with its corresponding sequence of shapes
and set
\begin{equation}\label{E:varphi}
\varphi_1((x_i,y_i)_{i=1}^{n})=(\tilde{x}_{i},\tilde{y}_i)_{i=1}^{n-1} \
\quad \text{\rm where}\quad \tilde{x}_i = y_{i}\ \wedge \
\tilde{y}_i=x_{i+1} \ .
\end{equation}
In view of $x_1=y_{n}=\varnothing$ we can conclude that $\varphi_1$ is
bijective. Since the vacillating tableaux of a partition is
generated by $(-\square,\varnothing)$, $(\varnothing,+\square)$,
$(\varnothing,\varnothing)$, $(-\square,+\square)$, we have
\begin{equation}\label{E:neu}
\forall\, 1\le i\le n-1;\qquad
(\tilde{x}_i,\tilde{y}_i)\in
\{(\varnothing,\varnothing),\  (+\square_h,\varnothing), \
(\varnothing,-\square_h), \ (+\square_h,-\square_j)\} \ ,
\end{equation}
where $1\le h,j\le k-1$. Let $\varphi_2$ be given by
\begin{equation}\label{E:varphi2}
\varphi_2((\tilde{x}_i,\tilde{y}_i))=
\begin{cases}
(\tilde{x}_i,\tilde{y}_i) & \ \text{\rm for } \
(\tilde{x}_i,\tilde{y}_i)=(+\square_h,-\square_j) \\
(\tilde{y}_i,\tilde{x}_i) & \ \text{\rm otherwise.}
\end{cases}
\end{equation}
$\varphi_2$ has by definition the property
$\varphi_2((\tilde{x}_i,\tilde{y}_i))\in \{(-\square_h,\varnothing),
(\varnothing,+\square_h),(\varnothing,\varnothing),(+\square_h,-\square_j)\}$.
\\
{\it Claim $1$.} The mapping
$$
\vartheta\colon \mathcal{P}_{k}(n)\longrightarrow   \mathcal{B}_k(n-1) \ ,\quad
\vartheta=\beta\circ \varphi_2\circ\varphi_1
\circ\beta^{-1}
$$
is well-defined and a bijection.\\
For arbitrary $\pi\in \mathcal{P}_k(n)$ we set
$V_\varnothing^{2n}(\pi)^\dagger=\varphi_2\circ \varphi_1(
V_\varnothing^{2n}(\pi))$.
By construction $V_\varnothing^{2n}(\pi)^\dagger$ is given by
$
\varphi_2(\varphi_1(x_i,y_i)_{i=1}^n))=(a_i,b_i)_{i=1}^{n-1}
$, where
$(a_i,b_i)\in \{(-\square_h,\varnothing),(\varnothing,+\square_h),
(\varnothing,\varnothing),(+\square_h,-\square_j)\}$.
Its induced sequence of collections of rows of squares
$(\mu^i)_{i=0}^{2(n-1)}$ has the following properties:
\begin{eqnarray}\label{E:w}
&& \mu^{2(n-1)}=\lambda^{2n-1}=\varnothing,\\
\label{E:right}
&& \mu^{2j+2}\setminus\mu^{2j+1},\; \mu^{2j+1}\setminus \mu^{2j}\in
\{(\varnothing,\varnothing), (\varnothing,+\square), (-\square,\varnothing),
(+\square,-\square)\} \\
\label{E:shape}
&& \mu^{2j+1}\neq\lambda^{2j+2}\quad \Longrightarrow\quad
\mu^{2j+1}\in \{\lambda^{2j+1},\lambda^{2j+3}\} \ .
\end{eqnarray}
Eq.~(\ref{E:w}) is obvious and eq.~(\ref{E:right}) follows from
eq.~(\ref{E:neu}). By construction of $(\mu^i)_{i=0}^{2(n-1)}$,
for $1\le j\le n-1$, $\mu^{2j} = \lambda^{2j+1}$ holds.
Suppose $\mu^{2j+1}\neq\lambda^{2j+2}$ for
some $0\le j\le n-2$. By definition of $\varphi_2$ only pairs containing
``$\varnothing$'' in at least one coordinate are transposed from which we
can conclude $\mu^{2j+1}=\mu^{2j}$ or $\mu^{2j+1}=\mu^{2j+2}$, i.e.
$$
\diagram
& \lambda^{2j+1} \ar@{=}[dl]\rto^{} & \lambda^{2j+2}\ar@{-}[dl]
\rto^{} &
\lambda^{2j+3} \ar@{=}[dl]  \\
 \mu^{2j} \ar@{=}[r] & \mu^{2j+1} \rto^{} &\mu^{2j+2}
\enddiagram
 \text{\rm or }
\diagram
& \lambda^{2j+1} \ar@{=}[dl]\rto^{} & \lambda^{2j+2}\ar@{-}[dl]
\rto^{} &
\lambda^{2j+3} \ar@{=}[dl]  \\
 \mu^{2j} \rto^{} & \mu^{2j+1} \ar@{=}[r] &\mu^{2j+2}
\enddiagram
$$
whence eq.~(\ref{E:shape}). In particular each collection of rows of
squares $\mu^i$ is a shape, i.e.~$V_\varnothing^{2n}(\pi)^\dagger$
corresponds to a braid.
Eq.~(\ref{E:shape}) immediately implies that $(\mu^{i})_{1\le
i\le 2(n-1)}$ has at most $k-1$ rows if and only if $(\lambda^{i})_{1
\le i\le 2n}$ does. Therefore $\vartheta$ is well-defined. Obviously
$\vartheta$ is bijective and Claim $1$ follows.\\
%%%%%%%%%%%%%%%%%%%%%%%%%%%%%%%%%%%%%%%%%%%%%%%%%%%%%%%%%%%%%%%%%%%%%%%%%%%
{\it Claim $2$.} For any $\pi\in\mathcal{P}_k(n)$ holds: $(i,j)$ is an arc
of $\pi$ if and only if $(i,j-1)$ is an arc in $\vartheta(\pi)$.\\
%%%%%%%%%%%%%%%%%%%%%%%%%%%%%%%%%%%%%%%%%%%%%%%%%%%%%%%%%%%%%%%%%%%%%%%%%%%
From the proof of Theorem~\ref{T:bij} \cite{Reidys:07vac} we know that
a $\pi$- and $\vartheta(\pi)$-origin at $j$ is equivalent to the
existence of a ``$\,+\square\,$'' in the pair-step between the shapes
$\lambda^{2j-1}$ and $\lambda^{2j}$ and $\mu^{2j-2}$ and $\mu^{2j}$,
respectively. We have the following alternative
$$
\diagram
& \lambda^{2j-1} \ar@{=}[dl]\rto^{+\square} & \lambda^{2j}\ar@{-}[dl]
\rto^{-\square} &
\lambda^{2j+1} \ar@{=}[dl]  \\
 \mu^{2j-2} \rto^{+\square} & \mu^{2j-1} \rto^{-\square} &\mu^{2j}
\enddiagram
\quad
\diagram
& \lambda^{2j-1} \ar@{=}[dl]\rto^{+\square} &
\lambda^{2j}\ar@{-}[dl] \rto^{\varnothing} &
\lambda^{2j+1} \ar@{=}[dl]  \\
 \mu^{2j-2} \rto^{\varnothing} & \mu^{2j-1} \rto^{+\square} &\mu^{2j}
\enddiagram
$$
It is clear by diagram-chasing that $\pi$ has an origin at $j$ if and
only if $\vartheta(\pi)$ does.
The situation changes however for endpoints of arcs. A $\pi$- and
$\vartheta(\pi)$-endpoint at $j$ is equivalent to a ``$\,-\square\,$'' in the
pair-step between $\lambda^{2j-2}$ to $\lambda^{2j-1}$ and $\mu^{2j-2}$
to $\mu^{2j}$, respectively. Therefore we have the following two
situations:
$$
\diagram
& \lambda^{2j-1} \ar@{=}[dl]\rto^{+\square} & \lambda^{2j}\ar@{-}[dl]
\rto^{-\square} &
\lambda^{2j+1} \ar@{=}[dl]  \\
 \mu^{2j-2} \rto^{+\square} & \mu^{2j-1} \rto^{-\square} &\mu^{2j}
\enddiagram
\quad
\diagram
& \lambda^{2j-1} \ar@{=}[dl]\rto^{\varnothing} &
\lambda^{2j}\ar@{-}[dl] \rto^{-\square} &
\lambda^{2j+1} \ar@{=}[dl]  \\
 \mu^{2j-2} \rto^{-\square} & \mu^{2j-1} \rto^{\varnothing} &\mu^{2j}
\enddiagram
$$
Again by diagram-chasing we immediately conclude that $j$ is an endpoint in
$\vartheta(\pi)$ if and only if $(j+1)$ is an endpoint in $\pi$ and
Claim $2$ follows, completing the proof of the theorem.
\end{proof}
As an illustration of the mapping $\vartheta\colon \mathcal{P}_k(n)
\longrightarrow\mathcal{B}_k(n-1)$ we give the following example
\begin{center}
\scalebox{0.5}[0.5]{\includegraphics*[50,710][620,860]{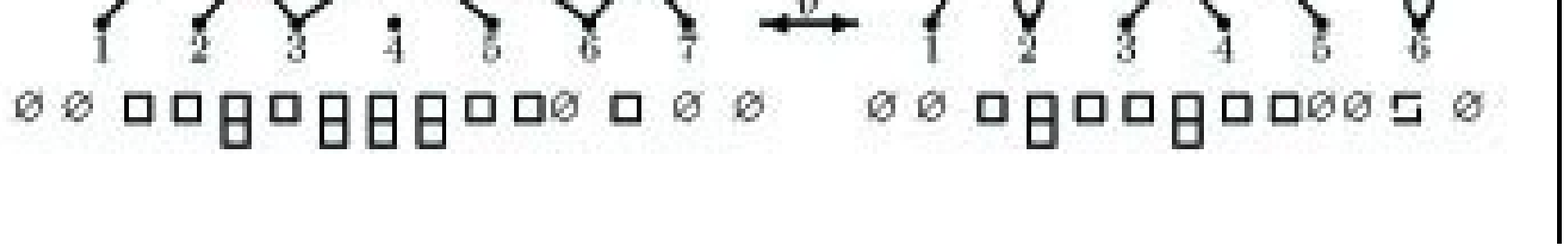}}
\end{center}
Theorem~\ref{T:k-noncross} implies by restriction a bijection
between $k$-noncrossing $2$-regular partitions and braids without
isolated points, denoted by $\mathcal{B}_k^\dagger(n)$. This is of
importance since the former cannot be enumerated via the reflection
principle while the latter can: braids without isolated points
``just'' lack the pair-step $(\varnothing,\varnothing)$ which
introduces a factor $e^x$ for the generating functions.
Consequently, we can enumerate $\mathcal{B}_k^\dagger$ using kernel
methods.

We will actually give an independent direct proof of this result.
For this purpose we interpret $k$-noncrossing braids without isolated
points as a subset of $k$-noncrossing partitions. For $\delta\in
\mathcal{B}^\dagger_k(n)$ we identify loops with isolated points and
crossing arcs $(i,j)$ and $(j,h)$, where $i<j<h$ by noncrossing
arcs. We accordingly arrive at the bijection
\begin{equation}\label{E:nobraid}
f\colon \mathcal{B}_k^\dagger(n)\longrightarrow
\{\pi\in \mathcal{P}_k(n)\mid \not \exists\,
(i_1,j_1),\dots ,(i_k,j_k);\quad i_1<\dots <i_k=j_1<\dots <j_k \,\} \ .
\end{equation}
Let $\mathcal{P}_{k,2}(n)$ denote the set of $2$-regular,
$k$-noncrossing partitions, i.e.~the set of $k$-noncrossing partitions without
arcs of the form $(i,i+1)$.
%%%
%%%%%%%%%%%%%%%%%%%%%%%%%%%%%%%%%%%%%%%%%%%%%%%%%%%%%%%%%%%%%%%%%%%%%%%%%%%%%
%%%
\begin{theorem}\label{T:bijection-2}
Let $k\in\mathbb{N}$, $k\ge 3$. Then we have the bijection
\begin{equation}\label{E:biject1}
\vartheta\colon \mathcal{P}_{k,2}(n)\longrightarrow
\mathcal{B}_k^\dagger(n-1) \ ,
\end{equation}
where $\vartheta$ is given by Theorem~\ref{T:k-noncross}.
\end{theorem}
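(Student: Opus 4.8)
The plan is to obtain Theorem~\ref{T:bijection-2} as a restriction of the bijection $\vartheta\colon\mathcal{P}_k(n)\to\mathcal{B}_k(n-1)$ furnished by Theorem~\ref{T:k-noncross}. Since $\vartheta$ is already known to be a bijection, it suffices to show that it maps $\mathcal{P}_{k,2}(n)$ \emph{onto} $\mathcal{B}_k^\dagger(n-1)$, i.e.\ that for $\pi\in\mathcal{P}_k(n)$ one has $\pi\in\mathcal{P}_{k,2}(n)$ if and only if $\vartheta(\pi)$ has no isolated points. Recalling that $\mathcal{B}_k^\dagger(n-1)$ consists of those $k$-noncrossing braids over $[n-1]$ in which every vertex has degree $\ge 1$ (no isolated points), the task reduces to a purely local analysis of which vertices of $\vartheta(\pi)$ are isolated, expressed via the $1$-arc structure of $\pi$.

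First I would translate the condition ``$v$ is isolated in $\vartheta(\pi)$'' into the language of the vacillating tableau. By the arc-origin/arc-endpoint dictionary established inside the proof of Theorem~\ref{T:k-noncross} (a ``$+\square$'' in the pair-step governing $\mu^{2j-2},\mu^{2j}$ signals an origin at $j$, a ``$-\square$'' signals an endpoint at $j$), a vertex $j$ of $\vartheta(\pi)$ is isolated exactly when the pair-step $(a_j,b_j)$ at position $j$ is $(\varnothing,\varnothing)$. Via $\varphi_2$ and $\varphi_1$ this pulls back to a condition on the original tableau of $\pi$: $(a_j,b_j)=(\varnothing,\varnothing)$ corresponds to $(\tilde{x}_j,\tilde{y}_j)=(\varnothing,\varnothing)$, which by \eqref{E:varphi} means $y_j=\varnothing$ and $x_{j+1}=\varnothing$. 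In the tableau of a partition, $y_j=\varnothing$ means the $j$th pair-step does not end in $+\square$ and $x_{j+1}=\varnothing$ means the $(j+1)$st pair-step does not begin with $-\square$; together, reading off the partition arcs from the pair-steps (an origin of $\pi$ at $j$ $\Leftrightarrow$ a $+\square$ at step $j$, an endpoint of $\pi$ at $j+1$ $\Leftrightarrow$ a $-\square$ opening step $j+1$), this says precisely that vertex $j$ of $\pi$ is \emph{not} an arc-origin and vertex $j+1$ of $\pi$ is \emph{not} an arc-endpoint.

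The key step is then to recognize that ``$j$ is not an origin of $\pi$ and $j+1$ is not an endpoint of $\pi$'' is equivalent to ``$\pi$ has no arc $(j,j+1)$''. The forward direction is immediate, since an arc $(j,j+1)$ would make $j$ an origin and $j+1$ an endpoint. For the converse one must rule out the remaining possibilities: if $j$ were an origin, its arc $(j,h)$ would have $h\ge j+1$, and $h=j+1$ is excluded by hypothesis on vertex $j+1$, so $h\ge j+2$; symmetrically if $j+1$ were an endpoint its arc $(i,j+1)$ would have $i\le j$, and $i=j$ is excluded, so $i\le j-1$ — neither case produces the forbidden $1$-arc, but also neither case is actually consistent with ``$j$ not an origin and $j+1$ not an endpoint'' unless $j$ is genuinely non-origin and $j+1$ genuinely non-endpoint; the clean statement is that the \emph{pair} of conditions holds for all $j$ simultaneously iff $\pi$ has no $1$-arc at all. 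Here one should be a little careful that a partition vertex has degree $\le 2$ and that the same vertex can be both an endpoint and an origin, so I would phrase the equivalence as: $\pi$ is $2$-regular $\iff$ for every $j\in[n-1]$, vertex $j$ is not an origin of a $1$-arc and vertex $j+1$ is not an endpoint of a $1$-arc $\iff$ for every $j\in[n-1]$, the pair-step $(a_j,b_j)$ of $\vartheta(\pi)$ is not $(\varnothing,\varnothing)$ forced away — more precisely, tracking that the only way to create a $1$-arc $(j,j+1)$ in $\pi$ is via consecutive steps $(\ldots,+\square_h)$, $(-\square_h,\ldots)$ at positions $j,j+1$, which under $\varphi_1$ becomes $(\tilde{x}_j,\tilde{y}_j)=(+\square_h,-\square_h)$, i.e.\ a hesitating-type step that $\varphi_2$ fixes, and this is exactly the step whose \emph{absence for all $j$} is equivalent to $\vartheta(\pi)$ containing no $(\varnothing,\varnothing)$.

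The main obstacle I anticipate is not any deep argument but the bookkeeping of the off-by-one index shift: the arc $(i,j)$ of $\pi$ becomes $(i,j-1)$ in $\vartheta(\pi)$, so a $1$-arc $(j,j+1)$ of $\pi$ becomes the loop-or-isolated configuration at vertex $j$, and one must check that ``$\pi$ has the arc $(j,j+1)$'' lines up with ``vertex $j$ of $\vartheta(\pi)$ receives no incident arc from the $\varphi_2$-transposition'' rather than with a genuine loop. This is where I would slow down and draw the two local diagrams (as in the proof of Theorem~\ref{T:k-noncross}) for the step-pair $(+\square_h,-\square_h)$ at position $j$ versus the step-pair $(\varnothing,\varnothing)$ at position $j$, verifying that the former is precisely the source of $1$-arcs in $\pi$ while the latter is precisely the source of isolated vertices in $\vartheta(\pi)$, and confirming that $\varphi_1,\varphi_2$ interchange these two phenomena vertex-by-vertex. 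Once that local correspondence is nailed down, the global statement ``$\pi$ $2$-regular $\iff$ $\vartheta(\pi)$ has no isolated points'' follows, and since $\vartheta$ is a bijection on the ambient sets by Theorem~\ref{T:k-noncross}, its restriction to $\mathcal{P}_{k,2}(n)$ is a bijection onto $\mathcal{B}_k^\dagger(n-1)$, as claimed.
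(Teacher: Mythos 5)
Your overall strategy --- restrict the bijection of Theorem~\ref{T:k-noncross} and characterize the preimage of $\mathcal{B}_k^\dagger(n-1)$ --- is reasonable in outline, but the characterization you set out to prove is not correct, and the argument for it breaks exactly where you hesitate. You correctly translate ``$j$ is isolated in $\vartheta(\pi)$'' into ``$(a_j,b_j)=(\varnothing,\varnothing)$'', i.e.\ $y_j=\varnothing$ and $x_{j+1}=\varnothing$, i.e.\ ``$j$ is not an origin of $\pi$ and $j+1$ is not an endpoint of $\pi$''. But the key equivalence you then assert --- that this conjunction is equivalent to ``$\pi$ has no arc $(j,j+1)$'' --- fails in the direction you need: take $j$ and $j+1$ both isolated vertices of $\pi$ (for instance $\pi$ the partition of $[n]$ into singletons, which certainly lies in $\mathcal{P}_{k,2}(n)$); then $j$ is not an origin and $j+1$ is not an endpoint although no $1$-arc is present. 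Your own text registers the problem (the sentence ``neither case is actually consistent with \dots unless \dots'' does not resolve into an argument) and then papers over it by declaring that the absence of the steps $(+\square_h,-\square_h)$ is ``exactly'' the absence of $(\varnothing,\varnothing)$ steps in the image. That identification is false: these are two \emph{different} generators of the braid tableaux, encoding respectively a loop/crossing pair and an isolated point. What $2$-regularity of $\pi$ actually controls, via $(i,i+1)\mapsto(i,i)$, is the absence of \emph{loops} in the contracted diagram; it says nothing about isolated points, so your proof establishes nothing about membership in $\mathcal{B}_k^\dagger(n-1)$ as you have set it up.

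The paper's proof is organized quite differently, and the part you omit is where its real content lies. It never argues about isolated points of the image at all: it uses the identification~(\ref{E:nobraid}) of $\mathcal{B}_k^\dagger(n-1)$ with the partitions over $[n-1]$ avoiding a configuration $i_1<\dots<i_k=j_1<\dots<j_k$ (under which every isolated point of such a partition is re-encoded as a loop of the braid, so ``no isolated points'' is built into the identification rather than derived from $2$-regularity). The first paragraph of the paper's proof uses $2$-regularity only to guarantee that the contracted diagram is loop-free, hence a genuine partition. The substantive work is then the two-way preservation of the crossing condition: $k$ mutually crossing arcs $(i_s,j_s)$ of $\pi$, with $i_1<\dots<i_k<j_1<\dots<j_k$, contract to arcs $(i_s,j_s-1)$ with $i_k\le j_1-1$, and the degenerate case $i_k=j_1-1$ lands precisely on the configuration forbidden by~(\ref{E:nobraid}); this boundary analysis is what makes ``$k$-noncrossing partition'' correspond to ``$k$-noncrossing \emph{braid}'' rather than merely to a $k$-noncrossing tangled-diagram, and it is also what drives the surjectivity argument (a $2$-regular preimage of a $k$-noncrossing braid must itself be $k$-noncrossing). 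None of this appears in your proposal, so even leaving aside the isolated-point issue, the noncrossing half of well-definedness and the surjectivity argument are missing.
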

%%%
%%%%%%%%%%%%%%%%%%%%%%%%%%%%%%%%%%%%%%%%%%%%%%%%%%%%%%%%%%%%%%%%%%%%%%%%%%%%%
%%%
\begin{proof}
By construction, $\vartheta$ maps tangled-diagrams over $[n]$
into tangled diagrams over $[n-1]$.
Since there exist no arcs of the form $(i,i+1)$, $\vartheta(\pi)$ is,
for any $\pi\in \mathcal{P}_{k,2}(n)$ loop-free. By construction,
$\vartheta$ preserves the orientation of arcs, whence $\vartheta(\pi)$
is a partition.\\
%%%%%%%%%%%%%%%%%%%%%%%%%%%%%%%%%%%%%%%%%%%%%%%%%%%%%%%%%%%%%%%%%%%%%%%%
{\it Claim.} $\vartheta\colon \mathcal{P}_{k,2}(n)\longrightarrow
\mathcal{B}_k^\dagger(n-1)$ is well-defined.\\
%%%%%%%%%%%%%%%%%%%%%%%%%%%%%%%%%%%%%%%%%%%%%%%%%%%%%%%%%%%%%%%%%%%%%%
We first prove that $\vartheta(\pi)$ is $k$-noncrossing.
Suppose there exist $k$ mutually crossing arcs, $(i_s,j_s)$, $s=1,\dots ,k$
in $\vartheta(\pi)$. Since $\vartheta(\pi)$ is a partition we have
$i_1<\dots <i_k<j_1<\dots <j_k$. Accordingly, we obtain for the partition
$\pi \in \mathcal{P}_{k,2}(n)$ the $k$ arcs
$(i_s,j_s+1)$, $s=1,\dots ,k$ where $i_1<\dots <i_k<j_1+1<\dots <j_k+1$,
which is impossible since $\pi$ is $k$-noncrossing.
We next show that $\vartheta(\pi)$ is a $k$-noncrossing braid. If
$\vartheta(\pi)$ is not a $k$-noncrossing braid, then according to
eq.~(\ref{E:nobraid}) $\vartheta(\pi)$
contains $k$ arcs of the form $(i_1,j_1),\dots (i_k,j_k)$ such that
$i_1< \dots <i_k=j_1<\dots <j_k$ holds. Then $\pi$ contains the arcs
$(i_1,j_1+1)$, $(i_k,j_k+1)$ where $i_1<\dots <i_k<j_1+1<\dots <j_k+1$,
which is impossible since these arcs are a set of $k$ mutually crossing arcs
and the claim follows.\\
%%%%%%%%%%%%%% inverse %%%%%%%%%%%%%%%%%%
{\it Claim.} $\vartheta$ is bijective.\\
Clearly $\vartheta$ is injective and it remains to prove surjectivity.
For any $k$-noncrossing braid $\delta$ there exists some $2$-regular
partition $\pi$ such that $\vartheta(\pi)=\delta$.
We have to show that $\pi$ is $k$-noncrossing.
Let $M'=\{(i_1,j_1),\dots ,(i_k,j_k)\}$ be a set of $k$ mutually
crossing arcs, i.e.~$i_1<\dots <i_k<j_1<\dots <j_k$. Then we have in
$\vartheta(\pi)$ the arcs $(i_s,j_s-1)$,
$s=1,\dots ,k$ and $i_1<\dots <i_k\le
j_1-1<\dots <j_k-1$. If $M=\{(i_1,j_1-1),\dots ,(i_k,j_k-1)\}$ is
$k$-noncrossing then we can conclude $i_k=j_1-1$. Therefore
$M=\{(i_1,j_1-1),\dots ,(i_k,j_k-1)\}$, where $i_k=j_1-1$ which is,
in view of eq.~(\ref{E:nobraid}) impossible in $k$-noncrossing braids.
By transposition we have thus proved that any $\vartheta$-preimage
is necessarily a $k$-noncrossing partition, whence the claim and the proof
of the theorem is complete.
\end{proof}

Theorem~\ref{T:bijection-2} allows for lattice path enumeration of
$\mathcal{P}_{k,2}(n)$. The main difficulty lies the kernel-computation
\cite{Mohanty79} and at present time there exists no such formula for
$k>3$. However, for $\mathcal{B}_3^\dagger(n-1)$ we have in the following
enumerative result.
%%%
%%%%%%%%%%%%%%%%%%%%%%%%%%%%%%%%%%%%%%%%%%%%%%%%%%%%%%%%%%%%%%%%%%%%%%%%%%%%%
%%%
\begin{theorem}\cite{MIRXIN,Reidys:073d}\label{T:braid}
The number of $3$-noncrossing braids without isolated points over $[n]$,
$\rho_3(n)$, is given by
\begin{eqnarray*}{\label{E:braidnum1}}
\rho_3(n) &=& \sum_{s\in\mathbb{Z}}\left[\beta_{n}(1,0,s)-\beta_{n}(1,-1,s)-
\beta_{n}(1,-4,s)+\beta_{n}(1,-3,s)\right.\\
& & -\beta_{n}(3,4,s)+\beta_{n}(3,3,s)+\beta_n(3,0,s)-\beta_{n}(3,1,s)\\
& & \left.
+\beta_{n}(2,5,s)-\beta_{n}(2,4,s)-\beta_{n}(2,1,s)+\beta_{n}(2,2,s))\right]
\ ,
\end{eqnarray*}
where $\beta_{n}(t,m,s)=\frac{t}{n+1}{n+1\choose s}{n+1\choose t+s}
{n+1\choose s+m}$. Furthermore $\rho_3(n)$ satisfies the recursion
\begin{equation}\label{E:recursion-b}
\alpha_1(n)\, \rho_{3}(n)+
\alpha_2(n)\, \rho_3(n+1)+
\alpha_3(n)\, \rho_{3}(n+2)-
\alpha_4(n)\, \rho_{3}(n+3)=0 \ ,
\end{equation}
where $\alpha_1(n)  =  8(n+2)(n+3)(n+1)$, $\alpha_2(n) =
3(n+2)(5n^2+47n+104)$, $\alpha_3(n) =  3(n+4)(2n+11)(n+7)$ and
$\alpha_4(n)= (n+9)(n+8)(n+7)$ and
\begin{equation}
\rho_{3}(n)\sim K \ 8^{n}n^{-7}(1+c_{1}/n+c_{2}/n^2+c_3/n^3),
\end{equation}
where $K=6686.408973$, $c_1=-28,\
c_2=455.77778$ and $c_3=-5651.160494$.
\end{theorem}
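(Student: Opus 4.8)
The plan is to derive this from the Bousquet-M\'{e}lou--Xin enumeration of $3$-noncrossing enhanced partitions \cite{MIRXIN} via the $(\varnothing,\varnothing)$-step bookkeeping sketched after Theorem~\ref{T:bijection-2}. Under the identification of $\mathcal{V}_{B,k,\varnothing}^{2n}$ with $\mathcal{B}_k(n)$ from Theorem~\ref{T:bij}, a $3$-noncrossing braid over $[n]$ is precisely a $3$-noncrossing enhanced partition over $[n]$, and a $(\varnothing,\varnothing)$ pair-step corresponds exactly to an isolated point. Hence every $\delta\in\mathcal{B}_3(n)$ is obtained uniquely from a braid without isolated points on its set of non-isolated vertices by freely inserting the remaining points, so $e_3(n)=\sum_{j=0}^n\binom{n}{j}\rho_3(j)$, where $e_3(n)=|\mathcal{B}_3(n)|$ is the number of $3$-noncrossing enhanced partitions over $[n]$. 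Binomial inversion yields $\rho_3(n)=\sum_{j=0}^n(-1)^{n-j}\binom{n}{j}\,e_3(j)$, equivalently $\mathbf{R}_3(x)=e^{-x}\,\mathbf{E}_3(x)$ for the exponential generating functions.

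Now I would feed in \cite{MIRXIN}: their kernel-method analysis of the associated walk/tableau model yields a closed triple-binomial-sum formula for $e_3(n)$ together with a D-finite generating function $\mathbf{E}_3(x)$ whose dominant singularity is $x=1/8$. Substituting their expression for $e_3(j)$ into the inversion formula and simplifying the resulting double sum by Vandermonde-type identities gives a sum of terms $\pm\beta_n(t,m,s)$; the twist by $e^{-x}$ is what shifts the parameters $t,m$ and creates the twelve summands in the stated formula. (Alternatively, once the closed form is conjectured, one verifies it directly by checking that it satisfies the recursion and enough initial values.) The recursion (\ref{E:recursion-b}) then follows because the class of D-finite power series is stable under multiplication by the entire function $e^{-x}$: $\mathbf{R}_3(x)$ again satisfies a linear ODE with polynomial coefficients, and translating this ODE into a recurrence for its Taylor coefficients and reducing it to minimal order --- directly from the explicit ODE of \cite{MIRXIN}, or by Gr\"obner-basis/creative-telescoping methods --- produces the order-$3$ recursion with the displayed $\alpha_i(n)$.

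Finally, the asymptotics come from singularity analysis of $\mathbf{R}_3(x)=e^{-x}\mathbf{E}_3(x)$ \cite{Reidys:073d}: since $e^{-x}$ is entire, the dominant singularity stays at $x=1/8$, producing the factor $8^n$, and its algebraic-logarithmic type, inherited from $\mathbf{E}_3$, fixes the polynomial factor $n^{-7}$. The Flajolet--Odlyzko transfer theorems then give $\rho_3(n)\sim K\,8^n n^{-7}(1+c_1/n+c_2/n^2+c_3/n^3)$, the constants $K,c_1,c_2,c_3$ being read off from the expansion of the singular part of $\mathbf{R}_3$ to sufficiently high order at $x=1/8$; I would cross-check the numerical values against $\rho_3(n)$ computed from (\ref{E:recursion-b}). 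The main obstacle lies not in any of these steps but in the input from \cite{MIRXIN}: solving the functional equation of the underlying walk model to obtain $\mathbf{E}_3(x)$ explicitly is the genuinely hard computation, which I would take as given; downstream, the only delicate points are simplifying the binomial-inversion double sum into exactly the twelve-term $\beta_n$-formula and carrying the singular expansion far enough to pin down three correction terms.
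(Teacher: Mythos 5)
Your plan hinges on the identification ``$3$-noncrossing braid over $[n]$ $=$ $3$-noncrossing enhanced partition over $[n]$,'' and this is where it goes wrong. Enhanced partitions correspond to hesitating tableaux, i.e.\ to braids \emph{without} isolated points: singletons become loops, so the step $(\varnothing,\varnothing)$ never occurs. Hence the enhanced-partition count of \cite{MIRXIN} is $\rho_3(n)=|\mathcal{B}_3^\dagger(n)|$ \emph{itself}, with no transform needed, while $|\mathcal{B}_3(n)|$ (all braids, with isolated points allowed) equals, by Theorem~\ref{T:k-noncross}, the number of ordinary $3$-noncrossing partitions of $[n+1]$. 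Your relation $e_3(n)=\sum_j\binom{n}{j}\rho_3(j)$ is correct if $e_3(n)$ denotes $|\mathcal{B}_3(n)|$, but you then declare $e_3$ to be the enhanced-partition sequence; feeding that into binomial inversion computes the inverse binomial transform of $\rho_3$ rather than $\rho_3$, i.e.\ the wrong sequence. Relatedly, the twelve summands $\pm\beta_n(t,m,s)$ do not arise from an $e^{-x}$ twist: the parameter $t\in\{1,2,3\}$ is the power of the kernel root $Y_0$ and $m$ the power of $x$ extracted in the orbit sum $(\bar{x}^2-\bar{x}-x^2+x)(x^2Y_0-\bar{x}^2Y_0^3+\bar{x}^3Y_0^2)$, so the $4\times 3$ structure is already present before any exponential factor could enter.

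Beyond this gap, be aware that the paper's Section~\ref{S:braid} deliberately does \emph{not} reduce to \cite{MIRXIN}: it gives a self-contained obstinate kernel-method computation for the braid walk (reflection principle, the algebraic root $Y_0$ of $K_{\mathcal{B}_3}$, extraction of positive and negative parts, a constant-term identity, and Lagrange inversion yielding $[x^mt^{2n+2}]Y_0^k=\beta_n(k,m,s)$ summed over $s$), with the recursion obtained by Zeilberger's algorithm from the explicit triple sum. For the asymptotics the paper uses the Birkhoff--Trjitzinsky theory of formal series solutions of the difference equation (extracting $\lambda=8$, $\theta=-7$ and $c_1,c_2,c_3$ by equating coefficients, with $K$ computed numerically and the correct FSS singled out by monotonicity), not Flajolet--Odlyzko singularity analysis; your transfer-theorem route is viable in principle but would require determining the local expansion of the generating function at $x=1/8$ to fourth order, which is exactly the work you have deferred. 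To salvage your approach, either cite the enhanced-partition result of \cite{MIRXIN} for $\rho_3(n)$ directly, or invert the ordinary $3$-noncrossing partition numbers of $[n+1]$; as written the reduction does not produce the stated formula.
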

%%%
%%%%%%%%%%%%%%%%%%%%%%%%%%%%%%%%%%%%%%%%%%%%%%%%%%%%%%%%%%%%%%%%%%%%%%%%%%%%%
%%%
The theorem has two parts: the first is the exact formula resulting from the
kernel computation \cite{MIRXIN} and the second is the asymptotic formula
\cite{Reidys:073d}.
In \cite{MIRXIN} the exact formula is computed, the authors also prove an
asymptotic formula.
In \cite{Reidys:073d} an improved asymptotic formula is given which is based
the analytic theory of singular difference equations developed by Birkhoff
and Trjitzinsky \cite{Birkhoff,T:wimp}. To keep the paper self-contained we
prove Theorem~\ref{T:braid} in the Section~\ref{S:braid}.

\begin{remark}
The enumeration results for $\mathcal{B}_3^\dagger(n)$ summarized in
Theorem~\ref{T:braid} imply trivially the enumeration of $\mathcal{B}_3(n)$.
According to the duality between braids and partitions we have therefore
obtained the enumeration of $3$-noncrossing partitions.
\end{remark}
%%%%%%%%%%%%%%%%%%%%%%%%%%%%%%%%%%%%%%%%%%%%%%%%%%%%%%%%%%%%%%%%%%

%%%
%%%%%%%%%%%%%%%%%%%%%%%%%%%%%%%%%%%%%%%%%%%%%%%%%%%%%%%%%%%%%%%%%%%%%%%%%
%%%

\section{Proof of Theorem~\ref{T:braid}}\label{S:braid}

%%%
%%%%%%%%%%%%%%%%%%%%%%%%%%%%%%%%%%%%%%%%%%%%%%%%%%%%%%%%%%%%%%%%%%%%%%%%
%%%

We have $k=3$, i.e.~walks induced by the vacillating braid-tableaux
in $\mathbb{Z}^2$, starting and ending at $(1,0)$. Via the
reflection principle we reduce the enumeration of these walks which
remain in the first quadrant and never touch the diagonal $x=y$
to the enumeration of lattice walks in the first quadrant
starting and ending at $(1,0)$ and starting at $(1,0)$ and ending at $(0,1)$,
respectively. Let $h(i,j,l)$ be the number of walks of length $l$ that end
at $(i,j)$ and let $H(x,y;t)  =  \sum_{i,j,l}h(i,j,l)x^iy^jt^{l}$.
We set $\bar{x}=x^{-1}$.\\
%%%
%%%%%%%%%%%%%%%%%%%%%%%%%%%%%%%%%%%%%%%%%%%%%%%%%%%%%%%%%%%%%%%%%%%
%%%
{\bf Claim $1$.}
The series $H(x,0;t)$ and $H(0,\bar{x};t)$ satisfy
\begin{align}\label{E:11}
t^2x(x+1)H(x,0;t)&=PT_{x}(x^2Y_0-\bar{x}^2Y_{0}^3+\bar{x}^3Y_{0}^2)\\
\label{E:22}
t^2\bar{x}(\bar{x}+1)H(0,\bar{x};t)&=NT_{x}(x^2Y_0-\bar{x}^2Y_{0}^3+
\bar{x}^3Y_{0}^2),
\end{align}
where the operator $PT_{x}$($NT_{x}$) extracts positive(negative)
powers of $x$ in series of $\mathbb{Q}[x,\bar{x}][[t]]$.\\
%%%
%%%%%%%%%%%%%%%%%%%%%%%%%%%%%%%%%%%%%%%%%%%%%%%%%%%%%%%%%%%%%%%%%%%
%%%
To prove the Claim $1$ we observe that the kernel of
\begin{eqnarray*}
H(x,y;t)-x & = &
(x+y+\bar{x}+\bar{y}+x\bar{y}+y\bar{x}+y\bar{y}+x\bar{x})\,t^2 H(x,y;t) \\
&& -t^2\, (x\bar{y}+\bar{y})\, H(x,0;t)-t^2\, (y\bar{x}+\bar{x})\,H(0,y;t)
\end{eqnarray*}
is given by:
\begin{equation}{\label{E:braidkernel}}
K_{\mathcal{B}_3}(x,y;t)=xy-t^2(x^2y+xy^2+y+x+x^2+y^2+2xy) \ .
\end{equation}
$K_{\mathcal{B}_3}(x,y;t)$ is an irreducible polynomial of degree $2$
over $\mathbb{Q}(y,t)$ having the two roots $Y_0=Y_0(x,t)$ and
$Y_1=Y_1(x,t)$. Only $Y_0$ is a power series with positive coefficients
in $t^2$:
\begin{equation}{\label{E:Y_0}}
Y_0 =  \frac{1-t^2(x+2+\bar{x})-
\sqrt{(1-t^2(x+2+\bar{x}))^2-4t^4x\,(1+\bar{x})^2}}
{2t^2(\bar{x}+1)}
\end{equation}
i.e.~$Y_0(x,t)=(1+x)t^2+(x(x+1)(\bar{x}+1)^2)t^4+O(t^6)$.
Furthermore we have $Y_0\,Y_1=x$ and
\begin{equation}{\label{E:braidkernel2}}
x^2\,\bar{y} \,K_{\mathcal{B}_3}(\bar{x}y,y;t)= K_{\mathcal{B}_3}(x,y;t) =
x^3\,K_{\mathcal{B}_3}(\bar{x}y,\bar{x};t) \ .
\end{equation}
Eq.~(\ref{E:braidkernel2}) implies
$K_{\mathcal{B}_3}(\bar{x}Y_0,\bar{x};t)=
K_{\mathcal{B}_3}(\bar{x}Y_0,Y_0;t)=K_{\mathcal{B}_3}(x,Y_0;t)=0$ and
we accordingly obtain
\begin{align}
x^2Y_0&=t^2x(x+1)H(x,0;t)+t^2Y_0(Y_0+1)H(0,Y_0;t)\\
\bar{x}^2Y_0^3&=t^2\bar{x}Y_0(\bar{x}Y_0+1)H(\bar{x}Y_0,0;t)+
t^2Y_0(Y_0+1)H(0,Y_0;t)\\
\bar{x}^3Y_{0}^2&=t^2\bar{x}Y_{0}(\bar{x}Y_{0}+1)H(\bar{x}Y_{0},0;t)+
t^2\bar{x}(\bar{x}+1)H(0,\bar{x};t) \ .
\end{align}
We next eliminate the terms $H(0,Y_{0};t)$ and $H(\bar{x}Y_{0},0;t)$
and arrive at
\begin{equation}{\label{E:braideqn}}
x^{2}Y_{0}-\bar{x}^2Y_{0}^3+\bar{x}^3Y_{0}^2=
t^2x(x+1)H(x,0;t)+t^2\bar{x}(\bar{x}+1)H(0,\bar{x};t) \ .
\end{equation}
Since $t^2x(x+1)H(x,0;t)$ and $t^2\bar{x}H(0,\bar{x};t)$ have only
positive and negative powers of $x$, respectively, we can conclude
\begin{eqnarray*}
t^2x(x+1)H(x,0;t) &=& PT_{x}(x^2Y_0-\bar{x}^2Y_{0}^3+\bar{x}^3Y_{0}^2)\\
t^2\bar{x}(\bar{x}+1)H(0,\bar{x};t) &=& NT_{x}(x^2Y_0-
\bar{x}^2Y_{0}^3+\bar{x}^3Y_{0}^2) \ .
\end{eqnarray*}
{\bf Claim $2$.}
Let $CT_x$ denote the constant coefficient of a Laurent-series
$\sum_{i\in I}a_ix^i$. Then we have
\begin{equation}\label{E:claim1}
\rho_3(n)= [t^{2n+2}]CT_{x}((1-x-x^4+x^3)Y_{0}+
(-\bar{x}^{4}+\bar{x}^{3}+1-\bar{x})Y_{0}^{3}+(\bar{x}^{5}-
\bar{x}^{4}-\bar{x}+\bar{x}^2)Y_{0}^2)\ .
\end{equation}
To prove Claim $2$ we write $\rho_3(n) =[xt^{2n}]H(x,0;t)-[yt^{2n}]H(0,y;t)$
and interpret the terms $[xt^{2n}]H(x,0;t)$ and $[yt^{2n}]H(0,y;t)$
via eq.~(\ref{E:11}) and eq.~(\ref{E:22}):
\begin{align*}
[xt^{2n}]H(x,0;t) &=
[x^2t^{2n+2}]PT_{x}(x^2Y_{0}-\bar{x}^2Y_{0}^3+\bar{x}^3Y_{0}^2)
-[xt^{2n+2}]PT_{x}(x^2Y_{0}-\bar{x}^2Y_{0}^3+\bar{x}^3Y_{0}^2) \\
[yt^{2n}]H(0,y;t) &= [\bar{x}t^{2n}]H(0,\bar{x};t)\\
   &=[\bar{x}^2t^{2n+2}]\, NT_{x}(x^2Y_{0}-\bar{x}^2Y_{0}^3+\bar{x}^3Y_{0}^2)
-[\bar{x}t^{2n+2}]\, NT_{x}(x^2Y_{0}-\bar{x}^2Y_{0}^3+\bar{x}^3Y_{0}^2) \ .
\end{align*}
We can combine these equations and obtain
\begin{align*}
\rho_3(n) &= [t^{2n+2}]CT_{x}(\bar{x}^{2}-\bar{x}-x^2+x)(x^2Y_{0}-
              \bar{x}^2Y_{0}^{3}+\bar{x}^{3}Y_{0}^{2})\\
&{\label{E:12term}}=[t^{2n+2}]CT_{x}((1-x-x^4+x^3)Y_{0}+
(-\bar{x}^{4}+\bar{x}^{3}+1-\bar{x})Y_{0}^{3}+(\bar{x}^{5}-
\bar{x}^{4}-\bar{x}+\bar{x}^2)Y_{0}^2) \ .
\end{align*}
%%%%%%%%%%%%%%%%%%%%%%%%%%%%%%%%%%%%%%%%%%%%%%%%%%%%%%%%%%%%%%%%%%%%%%%%%%%
{\bf Claim $3$.} Suppose $Y_0$ is the solution of $K_{\mathcal{B}_3}(x,y;t)=0$
with positive coefficients in $t^2$ of eq.~(\ref{E:Y_0}). Then we have
\begin{equation}
[x^mt^{2n+2}]Y_{0}^{k}=\frac{k}{n+1}{n+1\choose
s}{n+1\choose k+s}{n+1\choose s+m} \ .
\end{equation}
%%%%%%%%%%%%%%%%%%%%%%%%%%%%%%%%%%%%%%%%%%%%%%%%%%%%%%%%%%%%%%%%%%%%%%%%%%%
Since $K_{\mathcal{B}_3}(x,Y_0;t)=0$, (eq.~(\ref{E:braidkernel})) we have
$Y_{0}=t^{2}(\bar{x}+1)(x+Y_{0})(1+Y_{0}).$ Let
$\mathcal{G}(t^2)=(\bar{x}+1)(x+t^2)(1+t^2)$. We derive
\begin{align*}
[t^{2n+2}]Y_{0}^{k}&=
\frac{k}{n+1}[t^{2(n+1-k)}](\bar{x}+1)^{n+1}(x+t^2)^{n+1}(1+t^2)^{n+1}\\
  &=\frac{k}{n+1}\left(\sum_{s=0}^{n+1-k}(\bar{x}+1)^{n+1}{n+1\choose s}
     {n+s\choose n+1-s-k}x^{n+1-s}\right).
\end{align*}
We can conclude from this
\begin{equation}
[x^{m}t^{2n+2}]Y_{0}^{k}=\frac{k}{n+1}\sum_{s=0}^{n+1}{n+1\choose
s}{n+1\choose k+s}{n+1\choose s+m}.
\end{equation}
and Claim $3$ follows.
In order to prove the first assertion of the theorem, we calculate the
first term $[t^{2n+2}]CT_{x}((1-x-x^4+x^3)Y_{0}$ of eq.~(\ref{E:claim1}).
The terms
$(-\bar{x}^{4}+\bar{x}^{3}+1-\bar{x})Y_{0}^{3}$ and $(\bar{x}^{5}-
\bar{x}^{4}-\bar{x}+\bar{x}^2)Y_{0}^2$
can be computed analogously:
\begin{align*}
[t^{2n+2}]CT_{x}((1-x-x^4+x^3)Y_{0}&=[x^{0}t^{2n+2}]Y_{0}-
[x^{-1}t^{2n+2}]Y_{0}-[x^{-4}t^{2n+2}]Y_{0}+[x^{-3}t^{2n+2}]Y_0\\
&=\sum_{s=0}^{n+1}(\beta_{n}(1,0,s)-\beta_{n}(1,-1,s)-
\beta_{n}(1,-4,s)+\beta_{n}(1,-3,s))\ ,
\end{align*}
where $\beta_{n}(k,m,s)=\frac{k}{n+1}{n+1\choose s}{n+1\choose
k+s}{n+1\choose s+m}$. Using eq.~(\ref{E:braidnum1}) the recursion
follows from Zeilberger's algorithms {\cite{Wilf}} using MAPLE.\\
{\bf Claim $4$.}
There exist some $K>0$ and $c_{1},c_{2},c_{3}\dots$ such that
\begin{equation}
\rho_{3}(n)\sim K \ 8^{n}n^{-7}(1+c_{1}/n+c_{2}/n^2+c_3/n^3\cdots).
\end{equation}
The theory of singular difference equations \cite{Birkhoff} guarantees
the existence of $3$ linearly independent formal series solutions (FSS) for
eq.~(\ref{E:recursion-b}). We set
\begin{equation}
\rho_{3}(n) = E(n)K(n) \quad E(n)=e^{\mu_0n\ln n+\mu_{1}n}n^{\theta}
\end{equation}
where $K(n)=\exp\{\alpha_{1}n^{\beta+\alpha_2n^{\beta-1/\rho+\cdots}}\}$,
$\alpha_1\neq 0$, $\beta=j/\rho$, and $0\leq j<\rho$.
We immediately derive setting $\lambda =e^{\mu_{0}+\mu_{1}}$
\begin{align*}
\frac{\rho_{3}(n+k)}{\rho_{3}(n)}&=n^{\mu_{0}k}\lambda^{k}
\{1+\frac{k\theta+k^2\mu_{0}/2}{n}+\cdots\} \\
  & \quad \ \exp\{\alpha_1\beta kn^{\beta-1}+\alpha_2(\beta-\frac{1}{\rho})
kn^{\beta-1/\rho-1+\cdots}\},
\end{align*}
and arrive at
\begin{align*}
0=1+&\frac{15}{8}\{1+\frac{\theta+\mu_0/2+\frac{27}{5}}{n}+
\cdots\}\xi\{1+(\alpha_1\beta
n^{\beta-1}+\alpha_2(\beta-1/\rho)n^{\beta-1/\rho-1}+\cdots)+\cdots\}\\
+&\frac{3}{4}\{1+\frac{2\theta+2\mu_0+\frac{21}{2}}{n}+\cdots\}
\xi^2\{1+(2\alpha_1\beta
n^{\beta-1}+2\alpha_2(\beta-1/\rho)n^{\beta-1/\rho-1}+\cdots)+\cdots\}\\
-&\frac{1}{8}\{1+\frac{3\theta+9\mu_0/2+18}{n}+\cdots\}
\xi^3\{1+(3\alpha_1\beta
n^{\beta-1}+3\alpha_2(\beta-1/\rho)n^{\beta-1/\rho-1}+\cdots)+\cdots\}.
\end{align*}
First we consider the maximum power of $n$, which is zero. In view
of $1=\frac{1}{8}n^{3\mu_0}\lambda^3$ we obtain $\mu_0=0$. This
implies $\rho=1$ since $\rho\geq 1$ and $\rho$ should be the
smallest integer s.t. $\rho\mu_0\in \mathbb{N}$. Equating the
constant terms again, we obtain that $\lambda$ is indeed a root of
the cubic polynomial $P(X)$
$
P(X) = 1+\frac{15}{8}X+\frac{3}{4}X^2-\frac{1}{8}X^3
$.
Therefore we have $\lambda=8$ or $-1$. Notice that $0\leq \beta <1$
implies $\beta=0$. Otherwise, equating the coefficient of
$n^{\beta-1}$ implies $\alpha_1=0$, which is impossible. It remains
to compute $\theta$. For this purpose we equate the coefficient of
$n^{-1}$, i.e.~
$
8\frac{15}{8}(\theta+\frac{27}{5})+8^2\frac{3}{4}
(\frac{21}{2}+2\theta)-8^3\frac{1}{8}(18+3\theta)=0
$
from which we can conclude $\theta=-7$. Since $\rho_3(n)$ is
monotone increasing $\rho_3(n)$ coincides with the only
monotonously increasing FSS, given by
\begin{equation}
\rho_{3}(n)\sim K \cdot 8^{n}\cdot
n^{-7}(1+c_{1}/n+c_{2}/n^2+c_3/n^3\cdots)
\end{equation}
for some $K>0$ and constants $c_1,c_2,c_3$ and the proof of the
Claim $4$ is complete. \\
Equating the coefficients of $n^{-2},\ n^{-3}$ and $n^{-4}$, ($2268+81c_1=0$,
$1683c_1+162c_2-26712=0$ and $-32547c_1+729c_2+129654+243c_3=0$)
we obtain $c_1=-28$, $c_2=455.778$ and $c_3=-5651.160494$ and finally we
compute $K=6686.408973$ numerically, completing the proof of
Theorem~\ref{T:braid}.

%%%%%%
%%%%%%%%%%%%%%%%%%%%%%%%%%%%%%%%%%%%%%%%%%%%%%%%%%%%%%%%%%%%%%%%%%%%%%%%%%
%%%
{\bf Acknowledgments.}
%%%
%%%%%%%%%%%%%%%%%%%%%%%%%%%%%%%%%%%%%%%%%%%%%%%%%%%%%%%%%%%%%%%%%%%%%%%%%%
%%%
We are grateful to Prof.~W.Y.C.~Chen for stimulating discussions. This work
was supported by the 973 Project, the PCSIRT Project of the Ministry
of Education, the Ministry of Science and Technology, and the
National Science Foundation of China.

\bibliographystyle{amsplain}

\end{document}